\theoremstyle{plain}
\newtheorem{theorem}[equation]{Theorem}
\newtheorem{lemma}[equation]{Lemma}
\theoremstyle{definition}
\newtheorem{remark}[equation]{Remark}
\newtheorem{question}[equation]{Question}
\newcommand{\IR}{\mathbb{R}}
\title{On the Boundary Injectivity Radius of Buser-Colbois-Dodziuk-Margulis Tubes}
\author{\small{Luca F. Di Cerbo} \\ \scriptsize{University of Florida}\\ \footnotesize{\textsf{ldicerbo@ufl.edu}}}
\date{}
\begin{document}

\maketitle

\begin{abstract}
We give a lower bound on the boundary injectivity radius of the Margulis tubes with smooth boundary constructed by Buser, Colbois, and Dodziuk. This estimate depends on the dimension and a curvature bound only. 
\end{abstract}

\vspace{10cm}

\tableofcontents

\vspace{1cm}


\section{Introduction}

Let $(M, g)$ be a compact manifold of dimension $n\geq 3$, with sectional curvatures satisfying
\[
-a^2\leq \sec_g\leq -1,
\]
for some constant $a\geq 1$. Given $x\in (M, g)$, we denote by $i_g(x)=i(x)$ its injectivity radius. The so-called \emph{thick-thin decomposition} of $(M, g)$ (\emph{cf.} Section 8 in the book \cite{BGS85}) ensures us there is a positive constant
\begin{align}\label{muconst}
\mu=a^{-1}c_{n},
\end{align}
$c_n>0$ depending on the dimension only, so that if the set
\begin{align}\label{muset}
M_{\mu}:=\{x\in M\quad | \quad i(x)<\mu\}
\end{align}
is not empty, it is then the union of a \emph{finite} number of disjoint tubes $\{T_{\zeta}\}$ around short closed geodesics $\{\zeta\}$. Each $T_\zeta$ is homeomorphic to $\zeta\times B^{n-1}$, with $B^{n-1}$ a closed ball in $\IR^{n-1}$. Unfortunately, the boundary of $T_{\zeta}$ is \emph{not} usually smooth! Nevertheless, the fact that one understands the topology of regions with small injectivity radius is extremely important in the study of the geometry and topology of quotients of Hadamard manifolds. For example, the study of normalized Betti numbers presented in \cite{ABBG18} heavily relies on this \emph{coarse} thick-thin decomposition.

From a geometric analysis point of view, the lack of smoothness of the tubes $\{T_{\zeta}\}$ is a somewhat undesirable feature of the usual (or coarse) thick-thin decomposition. In many questions arising from the spectral geometry of compact quotients of Hadamard manifolds, it is indeed useful to work with a \emph{smooth} thick-thin decomposition due to Buser, Colbois and Dodziuk \cite{BCD93}. In this decomposition, one selects a subset $\{\gamma\}\subset\{\zeta\}$ of closed geodesics shorter than an explicit bound depending on $\mu$, and tubes $V_{\gamma}\subset T_{\gamma}$ with \emph{smooth} boundaries. For the details, we refer to the statement of Theorem \ref{mainBCD} below (\emph{cf}. Theorem 2.14 in \cite{BCD93}). See also the recent paper of Hamenst\"adt \cite{Ham18} for more on this circle of ideas.

Recently, M. Stern and the author also employed the smooth think-thin decomposition of Buser, Colbois and Dodziuk to study normalized Betti numbers of negatively curved manifolds, see \cite{DS19}. In this study, we are able to extend some of the convergence results proved in \cite{ABBG18}, and in some cases recover some of their results. Our approach heavily relies on the \emph{smooth} thick-thin decomposition.

The goal of this paper is to derive a boundary injectivity radius estimate for the smooth thick-thin decomposition of Buser-Colbois-Dodziuk. This estimates depends on the dimension and a curvature bound only, and it seems particularly useful when studying the manifold with boundary obtained from $(M, g)$ by removing the tubes $\{V_{\gamma}\}$ with smooth boundaries constructed in Theorem \ref{mainBCD} (\emph{cf}. Theorem 2.14 in \cite{BCD93}). Concluding, we believe this estimate to be a desirable feature to be added to the smooth thin-thick decomposition, and it should make it more readily available for a multitude of geometric analysis questions regarding compact quotients of Hadamard manifolds.\\

We conclude this introduction with the statement of our main theorem.

\begin{theorem}\label{improved}
	Let $(M, g)$ be a compact Riemannian manifold of dimension $n\geq 3$, with sectional curvature 
	\[
	-a^2\leq\sec_g\leq -1,
	\]
	for some constant $a\geq 1$. Let $(N, g)$ be the manifold with boundary obtained by removing the smooth disjoint tubes $\{V_{\gamma}\}$ constructed in Theorem \ref{mainBCD} (\emph{cf}. Theorem 2.14 in \cite{BCD93}) from $(M, g)$, and for each $\gamma$ set
	\[
	H_{\gamma}=\partial V_{\gamma}.
	\]
	There exists a constant $T(a, n)>0$ depending on $n$ and $a$ only such that the exponential map restricted to the normal bundle of $\partial N$
	\[
	\exp^{\perp}: \partial N\times [0, T(a, n))=\bigcup_{\gamma}H_{\gamma}\times [0, T(a, n))\longrightarrow M
	\]
	is a diffeomorphism onto its image.
\end{theorem}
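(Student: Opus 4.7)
The plan is to bound the boundary injectivity radius below by the minimum of two explicit quantities, each depending on $a$ and $n$ only: the \emph{focal radius} of $\partial N$ (the largest $T$ such that $\exp^{\perp}$ is a local diffeomorphism on $\partial N\times[0,T)$) and half the length of the shortest geodesic arc in $(M,g)$ meeting $\partial N$ orthogonally at both endpoints. Standard Klingenberg-type reasoning shows that the minimum of these two quantities is a lower bound for the boundary injectivity radius.

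For the focal radius, I would exploit the explicit form of the smooth tubes $V_{\gamma}$ in Theorem \ref{mainBCD}: each $H_{\gamma}$ is (after smoothing) a distance tube of some radius $r_{\gamma}$ around the short closed geodesic $\gamma$, and the BCD construction pins $r_{\gamma}$ to an interval controlled by $a$ and $n$. Along each ray of the normal bundle of $\gamma$ the shape operator of the distance tube satisfies a matrix Riccati equation driven by the curvature tensor. The pinching $-a^2\le \sec_g\le -1$ sandwiches this Riccati flow between two explicit model ODEs (constant curvatures $-a^2$ and $-1$), yielding a uniform bound $|\mathrm{II}_{H_{\gamma}}|\le A(a,n)$. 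Combined with the curvature bounds, a standard Jacobi-field comparison along geodesics normal to $\partial N$ then produces a focal-radius lower bound $T_1(a,n)>0$.

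For the cross-cut radius, suppose a geodesic arc $\sigma$ of length $L$ meets $\partial N$ orthogonally at endpoints $p\in H_{\gamma}$ and $q\in H_{\gamma'}$. If $\gamma\neq\gamma'$, then $\sigma$ must travel from the coarse Margulis tube $T_{\gamma}$ across part of the thick region into the tube $T_{\gamma'}$; since $T_{\gamma}$ and $T_{\gamma'}$ are disjoint and their geometry is quantitatively controlled by $\mu=a^{-1}c_n$, one obtains a lower bound on $L$ depending only on $a$ and $n$. If $\gamma=\gamma'$, lifting $\sigma$ and the tube $V_{\gamma}$ to the normal cover of $\gamma$ and invoking the strict convexity of distance tubes about a geodesic in strictly negative curvature exhibits $\sigma$ as the non-trivial side of a short geodesic bigon with the other side lying on $H_{\gamma}$; comparison with the constant curvature $-1$ model then quantitatively bounds $L$ from below. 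This gives a constant $T_2(a,n)>0$, and taking $T(a,n):=\tfrac{1}{2}\min\{T_1(a,n),T_2(a,n)\}$ completes the argument.

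\emph{The main obstacle} is making the case $\gamma=\gamma'$ of the cross-cut estimate quantitative: convexity alone rules out bigons in the universal cover, but extracting an explicit lower bound on $L$ in terms of $a$ and $n$ requires combining the Riccati control on the shape operator of $H_{\gamma}$ with a second-variation comparison, in order to show that any orthogonal chord of $V_{\gamma}$ that avoids $\gamma$ has length uniformly bounded below.
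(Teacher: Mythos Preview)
Your overall Klingenberg-type framework (focal radius versus shortest orthogonal chord) matches the paper exactly, and your treatment of chords joining \emph{different} boundary components is fine and agrees with the paper's use of Lemma~\ref{lbound}. There are, however, two substantive issues.

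\textbf{The hypersurfaces $H_\gamma$ are not distance tubes.} Your focal-radius argument and your same-$\gamma$ cross-cut argument both rest on the claim that $H_\gamma$ is (a smoothing of) a metric sphere $\{d(\cdot,\gamma)=r_\gamma\}$ with $r_\gamma$ controlled by $a,n$. That is not what Buser--Colbois--Dodziuk construct: $H_\gamma$ is a $C^\infty$ perturbation of the \emph{injectivity-radius} level set $\{i=\mu/2\}$, and its distance to $\gamma$ is neither constant nor bounded above in terms of $a,n$ alone (it tends to infinity as $\ell(\gamma)\to 0$). So the Riccati comparison along rays normal to $\gamma$ controls the wrong hypersurface. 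The needed bound $|\mathrm{II}_{H_\gamma}|\le A(a,n)$ is nonetheless true; it is established inside the proof of Theorem~2.14 in \cite{BCD93}, and the paper simply quotes it (Lemma~\ref{focal}) before applying Rauch for submanifolds.

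\textbf{The same-$\gamma$ cross-cut: the paper's argument is different and complete.} Your proposed route via convexity of distance tubes in a cyclic cover plus a second-variation comparison is, as you say, the unresolved step; and because $H_\gamma$ is not a distance tube, strict convexity is not available in the form you want. The paper bypasses all of this by exploiting the \emph{first} bullet of Theorem~\ref{mainBCD}: the outward unit normal to $H_\gamma$ makes angle less than $\tfrac{\pi}{2}-\alpha(a,n)$ with the radial field $\mathcal{R}$. Given a putative short orthogonal chord $\nu:[0,2l]\to M$ with endpoints $p,q\in H_\gamma$, one drops a perpendicular from $p$ to the radial arc through $q$, obtaining a small geodesic triangle with a right angle at the foot. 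Orthogonality of $\nu$ to $H_\gamma$ at $q$ together with the angle condition forces the interior angle at $q$ to be at least $\tfrac{\pi}{2}+\alpha$, so the angle sum exceeds $\pi$, contradicting $\sec_g\le -1$. Some care is needed to ensure the triangle is nondegenerate, and the paper uses the injectivity-radius bound $i(p)\ge \tfrac{24}{50}\mu$ from Lemma~\ref{lbound} and the distance bound of Lemma~\ref{BCD1} for this. The outcome is the explicit estimate $l\ge \mu/200$, with no convexity or second-variation input. I recommend replacing your same-$\gamma$ sketch by this triangle argument; it is both simpler and already quantitative.
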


For the details of the proof, we refer to Section \ref{conclusion}.

\vspace{0.1 in}
\noindent\textbf{Acknowledgments}. The author would like to thank Professor Mark Stern for asking the question addressed in this paper, and for several early discussions regarding this matter. Finally, he thanks the two referees for useful and constructive comments.

\section{Buser-Colbois-Dodziuk Thick-Thin Decomposition}\label{newTT}

In this section, we recall some features of the smooth thick-thin decomposition for compact quotients of Hadamard manifolds construction given by Buser, Colbois, and Dodziuk (\emph{cf}. Section 2 in \cite{BCD93}). We also fix the notation and, in Lemma \ref{lbound}, we summarize a few results we derived in \cite{DS19} concerning this construction. 

Let $(M, g)$ be a compact manifold of dimension $n\geq 3$, with sectional curvatures satisfying
\[
-a^2\leq \sec_g\leq -1,
\]
for some constant $a\geq 1$. We set
\[
\mu=a^{-1}c_{n},
\]
with $c_n>0$ depending on the dimension only, and
\[
M_{\mu}:=\{x\in M\quad | \quad i(x)<\mu\}
\]
as in Equations \eqref{muconst} and \eqref{muset}.  $M_{\mu}$ is the union (possibly empty) of disjoint tubes $\{T_{\gamma_i}\}$ around short closed geodesics $\{\gamma_i\}$. For simplicity, we require $\mu\leq 1$. For every tube $T_{\gamma}$, the core geodesic $\gamma$ has length $\ell(\gamma)<2\mu$. For every point $p\in \gamma$, and every tangent vector $v\in T_{p}M$ perpendicular to $\gamma'(0)$, let $\delta_{p, v}(t)$ denote the unit speed geodesic ray emanating from $p$ in the direction of $v$. We call these rays {\em radial arcs} and their tangent vector fields the radial vector field $\mathcal{R}$. 

In every interval $[0, t_0]$ such that $i(\delta_{p, v}(t))\leq \mu$ for $t\in [0, t_0]$,  the function $t\rightarrow i(\delta_{p, v}(t))$ is strictly monotonic increasing. Thus, there exists $R_{p, v}>0$ depending on the initial condition of the geodesic ray such that $i(\delta(R_{p, v}))=\mu$ and $i(\delta_{p, v}(t))<\mu$ for any $t\in[0, R_{p, v})$. The arc $\delta([0, R_{p, v}])$ is called the \emph{maximal arc}. Also, different radial arcs are disjoint with the exception possibly of their initial points. As we mentioned in the introduction, different maximal radial arcs in $T_{\gamma}$ may have very different lengths and the boundary of $T_{\gamma}$ needs not be smooth in general. We refer to Section 8 in the book \cite{BGS85} for more background material on the construction of the coarse thick-thin decomposition. 

Next, we need to recall Equation 2.5 in \cite{BCD93}, see also Lemma 2.4 therein.

\begin{lemma}[Equation 2.5 in \cite{BCD93}]\label{BCD1}
	There exist constants $C_1$, $C_2$ depending only on the dimension $n$, such that if
	\begin{align}\label{newconst}
	\ell(\gamma)\leq C_1\exp(-C_2 a)\mu^n a^{n-1},
	\end{align}
	then $d(x, \gamma)\geq 10$ for every $x\in T_{\gamma}$ with $i(x)=\mu/2$.
\end{lemma}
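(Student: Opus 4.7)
The plan is to work in the universal cover $\tilde M \to M$, where the closed geodesic $\gamma$ lifts to a geodesic $\tilde \gamma$ stabilized by a deck transformation $\phi \in \pi_1(M)$ translating by $\ell := \ell(\gamma)$. Fix a lift $\tilde x$ of $x$ realizing $d(\tilde x, \tilde \gamma) = d(x, \gamma) =: r$. Since $i(x) = \tfrac{1}{2}\min_{g \neq e} d(\tilde x, g\tilde x)$, the hypothesis $i(x) = \mu/2$ forces $d(\tilde x, \phi^k \tilde x) \geq \mu$ for every $k \neq 0$. The proof consists in producing, for a suitably chosen small power $k$, a matching upper bound on $d(\tilde x, \phi^k \tilde x)$ that rules out $r < 10$ under the hypothesis on $\ell$.

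For the upper bound, consider the variation $c_s(t) := \exp_{\tilde\gamma(s)}(t\, v(s))$, $s \in [0, k\ell]$, where $v(s)$ is the parallel transport along $\tilde\gamma$ of a unit vector $v(0) \perp \tilde\gamma'(0)$ chosen so that $\tilde x = c_0(r)$. The variational Jacobi field $J$ along $c_0$ satisfies $J(0) = \tilde\gamma'(0)$ and $J'(0) = 0$; Rauch comparison against the model of constant curvature $-a^2$ (legitimate because $\sec_g \geq -a^2$) yields $|J(t)| \leq \cosh(at)$. The curve $s \mapsto c_s(r)$ therefore has length at most $k\ell \cosh(ar)$ and connects $\tilde x$ to a point differing from $\phi^k \tilde x$ only by an orthogonal transformation of the unit normal at $\phi^k\tilde\gamma(0)$---the rotational defect of $\phi^k$ relative to parallel transport along $\tilde\gamma$. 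A short arc on the metric sphere of radius $r$ about $\phi^k\tilde\gamma(0)$, of length at most $\theta_k \cdot \sinh(ar)/a$ (again by Rauch), closes the path. A standard pigeonhole on the compact group $O(n-1)$, applied to the iterates of the rotational defect of $\phi$, furnishes an integer $k \leq K$ with $\theta_k \leq C(n)\, K^{-1/\dim O(n-1)}$. In sum,
\[
\mu \;\leq\; d(\tilde x, \phi^k \tilde x) \;\leq\; K\ell\cosh(ar) \;+\; C(n)\, K^{-1/\dim O(n-1)} \cdot \tfrac{\sinh(ar)}{a}.
\]

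To force $r \geq 10$, suppose instead $r < 10$. Pick $K$ large enough (of order $e^{\Theta(na)}$) so the rotational term is at most $\mu/2$, and then demand $\ell$ small enough that the translational term is also at most $\mu/2$---a contradiction with $d(\tilde x, \phi^k\tilde x) \geq \mu$. Writing $\mu = a^{-1}c_n$ and noting $\mu^n a^{n-1} = c_n^{n-1}\mu$, a direct optimization shows that both conditions are satisfied simultaneously whenever $\ell \leq C_1 \exp(-C_2 a)\, \mu^n a^{n-1}$ for constants $C_1, C_2 > 0$ depending only on $n$, which is precisely the stated hypothesis; hence $r \geq 10$.

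The principal obstacle, and the reason for the precise polynomial-and-exponential form of the hypothesis on $\ell$, is the rotational part of $\phi$ acting normally to $\tilde\gamma$: the naive translational upper bound $d(\tilde x, \phi \tilde x) \leq \ell \cosh(ar)$ fails whenever $\phi$ screws as well as translates, and this rotation is not controlled by $\ell$ alone. The pigeonhole argument inside $O(n-1)$, while standard, is exactly what inflates the bound on $\ell$ by the polynomial factor $\mu^{n-1} a^{n-1}$ and produces the $n$-dependence of $C_1, C_2$ in the hypothesis.
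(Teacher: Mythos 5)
This statement is quoted in the paper from \cite{BCD93} (Equation 2.5 and Lemma 2.4 there) and is not proved in the paper itself, so there is no in-paper argument to compare with; your proposal has to be judged against the source. On its own terms your sketch is correct and is the classical ``screw-motion'' route to such tube-depth estimates: bound the displacement of $\phi^k$ at distance $r$ from the axis by a translational term $k\ell\cosh(ar)$ plus a rotational term $\theta_k\sinh(ar)/a$, kill the rotational term by a Dirichlet pigeonhole on the holonomy in $O(n-1)$, and play the result against $d(\tilde x,\phi^k\tilde x)\ge 2i(x)=\mu$. All the key points are in place: the identity $i(x)=\tfrac12\min_{g\ne e}d(\tilde x,g\tilde x)$ is valid in negative curvature, the variational field along $c_s(t)=\exp_{\tilde\gamma(s)}(tv(s))$ does have $J(0)=\tilde\gamma'(0)$, $J'(0)=0$, and since $\mu=a^{-1}c_n$ one has $\mu^n a^{n-1}=c_n^{n-1}\mu$, so the bound $\ell\le C_1'e^{-C_2'a}\mu$ you obtain is exactly of the stated form (the lemma only asserts existence of constants depending on $n$, so matching BCD's specific constants is not required). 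Two small inaccuracies worth fixing, neither of which is a genuine gap: the comparison you invoke for a Jacobi field with $J(0)\neq0$, $J'(0)=0$ is the Berger (``second Rauch'') comparison rather than the basic Rauch theorem, though the direction you use is correct under $\sec_g\ge -a^2$ and there are no focal points here; and your closing remark misattributes the polynomial factor $\mu^{n-1}a^{n-1}$ to the pigeonhole --- with $\mu=c_n/a$ that factor is just the dimensional constant $c_n^{n-1}$, and the true cost of the pigeonhole is in the exponential constant, since $K$ must be of order $e^{\Theta(n^2a)}$ (through $\dim O(n-1)$), not $e^{\Theta(na)}$ as written. For what it is worth, the derivation in \cite{BCD93} runs through their quantitative tube/comparison estimates, where the factor $\mu^n a^{n-1}$ arises naturally; your displacement-plus-pigeonhole argument is an independent and equally standard way to reach the same conclusion, at the price of worse explicit constants.
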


We now declare a geodesic to be \emph{small} if and only if its length satisfies the bound \eqref{newconst} of Lemma \ref{BCD1}. This means we may not consider many small geodesics in the usual thick-thin decomposition of $M$. In other words, we only consider small geodesics which posses large Margulis tubes around them. This fact plays a role in the results that follow.

Next, given a geodesic $\gamma$ satisfying \eqref{newconst}, we look at the following tube around it:
\begin{align}\label{newtube}
U_{\gamma}:=\{x\in T_{\gamma}\quad | \quad i(x)\leq\mu/2\}.
\end{align}
Again, there is no a priori reason to believe that the boundary $\partial U_{\gamma}$ is smooth. Nevertheless, the following theorem of Buser, Colbois and Dodziuk ensures the existence of a small deformation of $U_{\gamma}$ with smooth boundary. 

\begin{theorem}[Theorem 2.14 in \cite{BCD93}]\label{mainBCD}
	Let $\gamma$ be a geodesic in $M$ satisfying \eqref{newconst}. There exists a smooth hypersurface $H_{\gamma}$ contained in $T_{\gamma}\setminus\gamma$ with the following properties:
	\begin{itemize}
		\item The angle $\theta$ between the radial vector field $\mathcal{R}$ and the exterior normal of $H_{\gamma}$ is less that $\pi/2-\alpha$ for a constant $\alpha=\alpha(a, n)\in (0, \pi/2)$. 
		\item The sectional curvatures of $H_{\gamma}$ with respect to the induced metric are bounded in absolute value by a constant depending only on $a$ and $n$.
		\item $H_{\gamma}$ is homeomorphic to $\partial U_{\gamma}$ by pushing along radial arcs. The distance between $x\in H_{\gamma}$ and its image $\bar{x}\in\partial U_{\gamma}$ satisfies $d(x, \bar{x})\leq\mu/50$.
	\end{itemize}
\end{theorem}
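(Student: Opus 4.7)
The plan is to establish two uniform estimates: a lower bound $T_1(a,n)>0$ on the focal distance of $\partial N$ along the outward (into $N$) unit normal, and a lower bound $T_2(a,n)>0$ on the length of any double normal of $\partial N$ in $M$ (a geodesic perpendicular to $\partial N$ at both endpoints). By a standard Klingenberg-type argument, these two bounds together force $\exp^\perp$ to be a diffeomorphism on $\partial N\times[0,\min(T_1,T_2/2))$.

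For the focal distance, the starting ingredient is a uniform pointwise bound on the shape operator, $\|A_{H_\gamma}\|\le K(a,n)$. Such a bound is implicit in the BCD construction (its proof controls the shape operator of the perturbed level sets used to smooth $\partial U_\gamma$) and is also recorded in Lemma \ref{lbound}, which summarizes the relevant auxiliary facts from \cite{DS19}. Combined with the ambient pinching $-a^2\le\sec_g\le -1$, a Riccati equation along any outward unit-speed normal geodesic yields the bound $T_1(a,n)$: strictly negative ambient curvature drives Jacobi fields apart on a time scale depending only on $K$ and $a$.

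For the double normal bound I would lift to the universal cover $\pi:\tilde M\to M$, a Hadamard manifold with $-a^2\le\sec\le-1$, and show that $\widetilde{\exp}^\perp$ is injective on $\pi^{-1}(\partial N)\times[0,T_2)$. A first-collision point of two outward normals either lies on a single lift of some $H_\gamma$ or between two distinct lifts. The single-lift case is ruled out by convexity of $\tilde H_\gamma$ with respect to its outward normal: in a Hadamard manifold, the outward normal exponential from the boundary of a convex set is a diffeomorphism onto the complement (a standard Riccati/Hessian argument, applicable because the deformed tube $\tilde V_\gamma$ inherits convexity from its unperturbed distance-ball model around $\tilde\gamma$). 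The distinct-lift case reduces by the triangle inequality to a uniform lower bound on $d_{\tilde M}(\tilde H^{(1)},\tilde H^{(2)})$. If the two lifts project to different $H_\gamma,H_{\gamma'}$, Lemma \ref{BCD1} combined with the radial monotonicity of $i(\cdot)$ inside each $T_\gamma$ places $H_\gamma$ at uniform distance from $\partial T_\gamma$, and hence from any other tube. If they project to the same $H_\gamma$, they surround distinct conjugate axes of the loxodromic deck element $\tau_\gamma$, and the Margulis lemma applied at a base point of $\tilde H_\gamma$ yields the required lower bound.

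The main obstacle is the subcase where two distinct lifts project to the same $H_\gamma$: since $\ell(\gamma)$ may be arbitrarily small, $\tau_\gamma$ moves nearby points by an arbitrarily small amount, so cosets of $\langle\tau_\gamma\rangle$ in $\pi_1(M)$ are not automatically separated in displacement. The resolution uses Lemma \ref{BCD1}: every point of $H_\gamma$ sits at distance at least $10$ from $\gamma$, and in pinched negative curvature this radial distance amplifies the displacement by any $g\notin\langle\tau_\gamma\rangle$, making the Margulis argument effective uniformly in $\ell(\gamma)$. The angle condition in Theorem \ref{mainBCD} underpins both the convexity of $\tilde V_\gamma$ needed in the single-lift case and the uniform positivity of outward radial displacement used to amplify cosets in the distinct-lift case.
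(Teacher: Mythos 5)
There is a fundamental mismatch between what you prove and what the statement asserts. The statement is Theorem \ref{mainBCD} itself (Theorem 2.14 of \cite{BCD93}): the \emph{construction} of a smooth hypersurface $H_\gamma\subset T_\gamma\setminus\gamma$ together with the uniform angle bound against the radial field $\mathcal{R}$, the uniform bound on its intrinsic sectional curvatures, and the radial-pushing homeomorphism onto $\partial U_\gamma$ with $d(x,\bar x)\le\mu/50$. Your proposal never constructs any hypersurface; it takes $H_\gamma$, its angle condition, and a uniform shape-operator bound as given inputs ("the angle condition in Theorem \ref{mainBCD} underpins\dots", "such a bound is implicit in the BCD construction") and then runs a Klingenberg-type argument (focal distance plus double normals) to bound the boundary injectivity radius of $\partial N$. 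That is an outline of the paper's \emph{main} result, Theorem \ref{improved}, whose proof in Section \ref{conclusion} indeed proceeds via Lemma \ref{focal}, Lemma \ref{short}, Lemma \ref{boundaryinjectivity} and Lemma \ref{lbound} — but it is not a proof of Theorem \ref{mainBCD}; relative to the stated theorem your argument is circular, since it invokes the very properties to be established. (In the paper itself Theorem \ref{mainBCD} is quoted from \cite{BCD93}, not reproved; an actual proof requires the BCD smoothing of $\partial U_\gamma$: building a controlled smooth approximation of the relevant displacement/injectivity-radius function, taking a level set, checking transversality to radial arcs to get the angle $\alpha(a,n)$, bounding its second fundamental form and using the Gauss equation for the intrinsic curvature bound, and verifying the level set stays within $\mu/50$ of $\partial U_\gamma$ along radial arcs.)

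Two secondary inaccuracies would remain even if one reinterpreted your target as Theorem \ref{improved}. First, Lemma \ref{lbound} does not record a shape-operator bound for $H_\gamma$; it records the injectivity-radius pinching $\tfrac{24}{50}\mu\le i(x)\le\tfrac{26}{50}\mu$ on $H_\gamma$ and the separation $d(H_\gamma,H_\zeta)>\tfrac{48}{50}\mu$; the second-fundamental-form bound lives inside the proof of Theorem 2.14 in \cite{BCD93} (as used in the paper's Lemma \ref{focal}). Second, your single-lift case leans on convexity of the smoothed tube $\tilde V_\gamma$, which is not guaranteed by the construction (only the angle bound $\theta<\pi/2-\alpha$ is); the paper avoids this by the angle-sum/triangle comparison argument of Lemma \ref{boundaryinjectivity}, which uses exactly that angle bound rather than convexity.
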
  

Given a geodesic $\gamma$ satisfying \eqref{newconst}, we consider the tube $V_\gamma$ around it defined by the requirements:
\begin{align}\label{finaltube}
H_{\gamma}=\partial V_{\gamma},\text{ and }\gamma\in V_\gamma.
\end{align}
These new tubes always have \emph{smooth} boundaries, and the union 
\[
\{V_\gamma\},\quad M\setminus \bigcup_\gamma V_{\gamma},
\]
gives the smooth thin-thick decompostion. We close this section with an estimate on the injectivity radius of points on the smooth boundaries $\{H_{\gamma}\}$, and with an estimate on the distance between the tubes $\{V_{\gamma}\}$. The following lemma is a collection of results proved in \cite{DS19}. It will be used at the end of the main argument in Section \ref{conclusion}.

\begin{lemma}[Lemma 34 and Lemma 36 in \cite{DS19}]\label{lbound}
	Let $\gamma$ be a geodesic in $M$ satisfying \eqref{newconst}. For any point $x\in H_{\gamma}$, we have
	\[
	\frac{26}{50}\mu\geq i(x)\geq \frac{24}{50}\mu.
	\] 
	Moreover, if $\gamma\neq \zeta$ are two distinct closed geodesics in $M$ satisfying \eqref{newconst}, then
	\[
	d(H_{\gamma}, H_{\zeta})>\frac{48}{50}\mu.
	\]
\end{lemma}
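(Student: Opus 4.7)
Both conclusions of Lemma~\ref{lbound} rest on the elementary 1-Lipschitz property
\[
|i(x)-i(y)|\leq d(x,y)
\]
of the injectivity radius function, which I will use repeatedly. For the first claim, observe that by the defining condition \eqref{newtube} one has $i\equiv \mu/2$ on $\partial U_\gamma$. The third bullet of Theorem \ref{mainBCD} furnishes, for each $x\in H_\gamma$, a radial companion $\bar x\in\partial U_\gamma$ with $d(x,\bar x)\leq\mu/50$. Applying 1-Lipschitz to the pair $(x,\bar x)$ gives
\[
\left|i(x)-\tfrac{\mu}{2}\right|\leq\tfrac{\mu}{50},
\]
which is exactly the two-sided bound $\tfrac{24}{50}\mu\leq i(x)\leq \tfrac{26}{50}\mu$.

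For the separation estimate the plan is to first prove $d(\partial U_\gamma,\partial U_\zeta)\geq \mu$ from the coarse thick-thin decomposition, and then pass to $H_\gamma,H_\zeta$ using the $\mu/50$ radial displacement of Theorem \ref{mainBCD}. The tubes $\{T_\gamma\}$ are the connected components of the open thin set $M_\mu=\{i<\mu\}$, so a continuous path from $\partial U_\gamma\subset T_\gamma$ to $\partial U_\zeta\subset T_\zeta$ (with $\gamma\neq\zeta$) must cross the closed complement $\{i\geq\mu\}$. The 1-Lipschitz bound applied at $x\in\partial U_\gamma$ forces any point $y$ with $d(x,y)<\mu/2$ to satisfy $i(y)<\mu$; hence $d(\partial U_\gamma,\{i\geq\mu\})\geq \mu/2$, and symmetrically from the $\zeta$ side. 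Adding the two half-contributions along any path joining $\partial U_\gamma$ to $\partial U_\zeta$ gives $d(\partial U_\gamma,\partial U_\zeta)\geq \mu$. Combining with the triangle inequality and $d(x,\bar x),\, d(y,\bar y)\leq \mu/50$ yields
\[
d(H_\gamma,H_\zeta)\;\geq\; d(\partial U_\gamma,\partial U_\zeta)-\tfrac{2\mu}{50}\;\geq\;\tfrac{48}{50}\mu.
\]

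The only step requiring genuine care is the strictness of the final inequality. I would obtain it by exploiting the restrictive length bound \eqref{newconst}: by Lemma \ref{BCD1}, the level set $\{i=\mu/2\}$ already lies at distance $\geq 10$ from $\gamma$, so each tube $T_\gamma$ extends radially far beyond $U_\gamma$, and distinct tubes $T_\gamma,T_\zeta$ are genuinely separated by an open region where $i>\mu$ strictly. Consequently the 1-Lipschitz sandwich $i(y)\leq i(x)+d(x,y)$ cannot be saturated throughout a minimizing connecting path, so $d(\partial U_\gamma,\partial U_\zeta)>\mu$ strictly, and the stated bound $d(H_\gamma,H_\zeta)>\tfrac{48}{50}\mu$ follows. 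This small rigidity-type refinement is the principal obstacle; the remainder is straightforward bookkeeping with the Lipschitz inequality and Theorem \ref{mainBCD}.
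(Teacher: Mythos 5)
The paper does not actually prove this lemma---it is imported verbatim from \cite{DS19} (Lemmas 34 and 36 there)---so there is no in-paper argument to compare against; I can only judge your reconstruction on its own terms. Most of it is sound. The observation that $i\equiv\mu/2$ on $\partial U_\gamma$ (by continuity of $i$ and the definition \eqref{newtube}), combined with the $1$-Lipschitz property of the injectivity radius and the $d(x,\bar x)\le\mu/50$ bound from the third bullet of Theorem \ref{mainBCD}, gives the two-sided bound $\tfrac{24}{50}\mu\le i(x)\le\tfrac{26}{50}\mu$ exactly as you say; note that the $1$-Lipschitz property is also the one the paper itself invokes in \eqref{q}, so you are on safe ground there. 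The separation argument is also correct as far as the non-strict bound: a minimizing geodesic from $\partial U_\gamma$ to $\partial U_\zeta$ must contain a point $w$ with $i(w)\ge\mu$ (distinct components of the open set $\{i<\mu\}$), additivity of distance along a minimizing geodesic gives $d(x,y)=d(x,w)+d(w,y)\ge\mu/2+\mu/2$, and the triangle inequality with the two radial displacements yields $d(H_\gamma,H_\zeta)\ge\tfrac{48}{50}\mu$.

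The genuine gap is exactly where you flagged it: the strictness of the final inequality. Your justification---that distinct tubes are ``separated by an open region where $i>\mu$ strictly,'' so the Lipschitz inequality ``cannot be saturated''---is an assertion, not a proof. Nothing in the coarse thick-thin decomposition prevents the closures of two components of $\{i<\mu\}$ from touching along the level set $\{i=\mu\}$, and Lemma \ref{BCD1} controls only the distance from $\{i=\mu/2\}$ to the core geodesic $\gamma$, not the width of the thick region between two tubes. If equality held, one would need $i$ to grow at exactly unit speed along the whole connecting geodesic, $i(w)=\mu$ exactly, and $d(x,\bar x)=d(y,\bar y)=\mu/50$ exactly with the Lipschitz bound saturated there as well; ruling out this degenerate chain requires an actual rigidity input (for instance, the strict monotonicity of $i$ along radial arcs together with the angle bound $\theta<\pi/2-\alpha(a,n)$ of Theorem \ref{mainBCD}, or the explicit $\cosh$-formula for the displacement function in the tube, whose gradient has norm strictly less than $1$ off the axis). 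As written, your argument proves $d(H_\gamma,H_\zeta)\ge\tfrac{48}{50}\mu$, which is all the present paper ever uses, but it does not establish the strict inequality claimed in the statement.
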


\begin{remark}
	In Section 4.2 of \cite{DS19}, the interested reader can find further effective bounds for the smooth thick-thin decomposition of Buser-Colbois-Dodziuk. In particular, Lemma 37 and Lemma 41 in \cite{DS19} provide refined bounds on the volume and length of the smallest geodesic arc for any smooth tube around a short geodesic. 
\end{remark}

\section{Boundary Injectivity Radius}\label{conclusion}

In this section, all closed geodesics are assumed to be small, that is their lengths satisfy the bound in Equation \eqref{newconst} of Lemma \ref{BCD1}. Given a small closed geodesic $\gamma$, we now study the normal bundle of the hypersurface $H_{\gamma}\subset M$.  Let $NH_{\gamma}$ be the normal bundle of $H_{\gamma}$ in $(M, g)$, i.e., the set of all tangent vectors to $M$ at points in $H_{\gamma}$ that are perpendicular to $H_{\gamma}$. Let 
\[
\exp^{\perp}: NH_{\gamma}\longrightarrow M
\]
be the usual Riemannian exponential map restricted to $NH_{\gamma}$. Since $H_{\gamma}$ is compact, for $\epsilon>0$ small enough if we restrict $\exp^{\perp}$ to tangent vectors $w$ with $g(w, w)\leq \epsilon$, then $\exp^{\perp}$ is a diffeomorphism onto its image. Here we would like to find a $t>0$ independent of the particular hypersurface $H_{\gamma}$ such that $\exp^{\perp}$ is a diffeomorphism on $H_{\gamma}\times [0, t)$. We call the \emph{boundary injectivity radius}, say $i_{H_{\gamma}}$, the largest such $t$. Here, we denote by $H_{\gamma}\times [0, t)$ a one sided open neighborhood of the zero section in $NH_{\gamma}$. Moreover, we use positive value of $t$ to indicate those geodesic that start on $H_{\gamma}$ and points away from the core geodesic $\gamma$.\\

We now seek a uniform lower bound of the boundary injectivity radius of such tubes. First, we need to avoid critical values of $\exp^{\perp}$. It is well known that $x\in M$ is a critical value of $\exp^{\perp}$ if and only if $x$ is a \emph{focal} point of $H_{\gamma}\subset M$, see Proposition 4.4 page 231 in the book \cite{doC92}. Now the notion of focal point can be given in terms of Jacobi fields along geodesic, see Section 4 in Chapter 10 of the book \cite{doC92}. More precisely, given the submanifold $H_{\gamma}\subset M$, we call $q\in M$ a focal point of $H_{\gamma}$ if there exists a geodesic $\nu: [0, \l]\rightarrow M$, with $\nu(0)\in H_{\gamma}$, $\nu^\prime(0)\in (T_{\nu(0)}H_{\gamma})^\perp$, $\nu(\l)=q$, and a non-zero Jacobi vector field $J$ along $\nu$ with $J(\l)=0$ and such that:
\[
J(0)\in T_{\nu(0)}H_{\gamma}, \quad J^\prime(0)+S_{\nu^\prime(0)}(J(0))\in (T_{\nu(0)}H_{\gamma})^\perp,
\]
where $S_{\nu(0)}$ is the linear self-adjoint operator on $T_{\nu(0)}H_{\gamma}$ associated to the second fundamental form of $H_{\gamma}$. Note that these requirements imply that 
\begin{align}\label{Jort}
J^\prime(0)+S_{\nu^\prime(0)}(J(0))=0 \Rightarrow J^\prime(0)\in T_{\nu(0)}H_{\gamma}.
\end{align}
Indeed, as $H_{\gamma}\subset M$ is a hypersurface we have the isometric splitting
\[
T_{\nu(0)}M=T_{\nu(0)}H_{\gamma}\oplus\IR\nu^{\prime}(0),
\]
so that if by contradiction we assume Equation \eqref{Jort} not to be true, we would have:
\[
J(t)=kt\nu^\prime(t)+\bar{J}(t),
\]
where $k\in \IR$ is non-zero and where $\bar{J}(t)$ is a Jacobi field along $\nu$ with
\[
\bar{J}(0)=J(0), \quad \bar{J}^\prime(0)\in T_{\nu(0)}H_{\gamma},
\]
so that $\bar{J}(t)\perp\nu(t)$ for all $t$. This implies that $J(\l)\neq 0$ which is then a contradiction.

We can then claim that $J^\prime(0)\in T_{\nu(0)}H_{\gamma}$ which combined with the fact that $J(0)\in T_{\nu(0)}H_{\gamma}$ implies
\[
J(t)\perp \nu(t), \quad t\in[0, \l].
\]
By linearity of Jacobi's equation, we next decompose 
\[
J(t)=J_{1}(t)+J_{2}(t),
\]
where $J_{1}$ and $J_{2}$ are Jacobi vector fields satisfying the initial conditions:
\[
J_{1}(0)=J(0),\quad J^\prime_{1}(0)=0, \quad \quad J_{2}(0)=0,\quad  J^\prime_{2}(0)=J^\prime(0).
\]
Next, normalize $J$ so that $|J(0)|=1$ and assume that we have
\[
J^\prime(0)=-S_{\nu^\prime(0)}(J(0))=-\lambda J(0),
\]
where $\lambda$ is the largest eigenvalue of $S_{\nu^\prime(0)}$. In other words, $\lambda$ is the \emph{largest} of the principal curvatures of $H_{\gamma}$ at the point $\nu(0)$. Assume also for a moment that the sectional curvature of $(M, g)$ satisfies $\sec_g=K$, with $K$ a negative constant. If $W(t)$ is the parallel transport of $J(0)$ along $\nu(t)$, a simple computation gives us the following:
\[
J_{1}(t)=\cosh{t\sqrt{-K}}W(t), \quad J_{2}(t)=\frac{\sinh{t\sqrt{-K}}}{\sqrt{-K}}(-\lambda)W(t).
\] 
Thus, in a space form of negative sectional curvature $K$ if the largest principal curvature of a hypersurface $H_{\gamma}$ is $\lambda$, i.e., $S\leq \lambda I$, we then have that the closest focal point, say $q$, satisfies
\[
d(H_{\gamma}, q)=t
\]
where $t$ is a solution of: 
\[
\coth{t\sqrt{-K}}=\frac{\lambda}{\sqrt{-K}}.
\]
We can now state a general lemma concerning the focal set of $H_{\gamma}$ in M.

\begin{lemma}\label{focal}
	Let $(M, g)$ be a compact manifold of dimension $n\geq 3$, with sectional curvature 
	\[
	-a^2\leq\sec_g\leq-1,
	\]
	for some constant $a\geq 1$. Let $H_{\gamma}$ be the smooth boundary of any of the Buser-Colbois-Dodziuk tubes in $M$. For any focal point $q$ of $H_{\gamma}$ we have
	\[
	d(H_{\gamma}, q)\geq t,
	\] 
	where $t$ is a solution of:
	\[
	\coth{t}=\lambda
	\]
	with $\lambda$ a constant depending on $a$ and $n$ only.
\end{lemma}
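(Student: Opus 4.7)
The plan is to reduce the general variable-curvature estimate to the explicit constant-curvature computation performed just before the statement by running a Riccati comparison along the normal geodesics to $H_\gamma$. Let $q=\nu(\ell)$ be the first focal point of $H_\gamma$ along a unit-speed normal geodesic $\nu$; equivalently, $\ell$ is the first time at which the shape operator $A(t)$ of the equidistant hypersurface $H_\gamma^{\,t}=\exp^{\perp}(H_\gamma\times\{t\})$ acquires an eigenvalue tending to $+\infty$. The operator $A(t)$ satisfies the matrix Riccati equation
\[
A'(t)+A(t)^2+R(t)=0,\qquad A(0)=S_{\nu'(0)},
\]
where $R(t)X=R(X,\nu'(t))\nu'(t)$ denotes the Jacobi curvature operator along $\nu$.

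In the hyperbolic model $\IH^n$ of constant sectional curvature $-1$, the Jacobi curvature operator equals $-I$, and I let $\tilde A(t)$ solve the corresponding Riccati system with isotropic initial data $\tilde A(0)=\lambda I$, where $\lambda$ will be a uniform upper bound on the principal curvatures of $H_\gamma$ to be produced below. The computation recalled before the lemma shows that $\tilde A$ first acquires an eigenvalue going to $+\infty$ at $\tilde\ell=\coth^{-1}(\lambda)$. Because $\sec_g\leq -1$ implies $R(t)\geq -I$ in the operator sense, and because $A(0)=S_{\nu'(0)}\leq \lambda I=\tilde A(0)$, the standard Riccati comparison theorem yields $A(t)\leq\tilde A(t)$ as long as both sides are defined. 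In particular, no eigenvalue of $A$ can blow up before the corresponding eigenvalue of $\tilde A$, so $\ell\geq\coth^{-1}(\lambda)$.

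The problem is thus reduced to producing a uniform upper bound $\lambda=\lambda(a,n)$ on the principal curvatures of $H_\gamma$, and this is the main obstacle. Theorem \ref{mainBCD} records only a bound on the intrinsic sectional curvatures of $H_\gamma$, whereas what we need is an extrinsic bound on the full shape operator $S_{H_\gamma}$. The requisite bound can however be extracted from the construction of Buser-Colbois-Dodziuk itself: $H_\gamma$ is obtained by smoothing the non-smooth level set $\partial U_\gamma=\{i(\cdot)=\mu/2\}$ via a controlled convolution along radial arcs, so a bound on $\|S_{H_\gamma}\|$ can be read off from the derivative bounds for the mollifier combined with the two-sided curvature bound $-a^2\leq\sec_g\leq -1$ and the transversality estimate $\theta<\pi/2-\alpha(a,n)$ of Theorem \ref{mainBCD}. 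With $\lambda=\lambda(a,n)$ so produced (enlarging it beyond $1$ if necessary so that $\coth^{-1}(\lambda)$ is defined), the value $t=\coth^{-1}(\lambda)$, equivalently $\coth t=\lambda$, is the asserted lower bound on $d(H_\gamma,q)$.
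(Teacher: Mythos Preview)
Your approach is correct and coincides with the paper's. The paper obtains the uniform shape-operator bound $S_p\leq\lambda(a,n)I_p$ exactly as you suggest, by pointing to the explicit second-fundamental-form estimate inside the proof of Theorem~2.14 of \cite{BCD93} (end of page~12), and then invokes Rauch's comparison theorem for submanifolds (do~Carmo, Chapter~10, Theorem~4.9) together with the constant-curvature $K=-1$ Jacobi-field computation preceding the lemma; your Riccati comparison is simply the infinitesimal reformulation of that same Rauch comparison.
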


\begin{proof}
	Because of Theorem \ref{mainBCD} (see the bound on the second fundamental form at the end of page 12 in proof of Theorem 2.14 in \cite{BCD93}), given any $H_{\gamma}\subset M$ and any $p\in H_{\gamma}$, we have that the linear self-adjoint operator associated to the second fundamental form of $H_{\gamma}$ at $p$
	\[
	S_p: T_p H_{\gamma}\longrightarrow T_p H_{\gamma}
	\]
	satisfies $S_p\leq \lambda(a, n) I_p$ where $\lambda$ is a constant depending on $a$ and $n$ only and $I_p: T_p H_{\gamma}\rightarrow T_p H_{\gamma}$ is the identity. We can now use Rauch's comparison theorem for submanifolds (\emph{cf}. Theorem 4.9 page 234 in \cite{doC92}) in conjunction with the Jacobi vector field argument given above and conclude the proof. In this case, the model manifold has constant negative sectional curvature equal to $-1$. 
\end{proof}

Thanks to Lemma \ref{focal}, we conclude that the focal set is bounded away from any of the $H_{\gamma}$ uniformly in terms the dimension and a bound on the curvature.
We now focus on the next obstruction for $\exp^{\perp}$ to be a diffeomorphism onto its image, namely the existence of certain short geodesics. More precisely, we have the following lemma.  
\begin{lemma}\label{short}
	Let $l>0$ be the smallest positive number such that
	\[
	\exp^{\perp}: H_{\gamma}\times [0, l)\longrightarrow M
	\]
	is a diffeomorphism. Let $\nu: [0, l)\rightarrow M$ be a normalized geodesic with $\nu^\prime(0)\in (T_{\nu(0)}H_{\gamma})^\perp$ such that
	\begin{align}\label{min}
	d(H_{\gamma}, \nu(t))\neq t
	\end{align}
	for $t>l$. If $\nu(l)$ is not a focal point of $H_{\gamma}$, we have that $\nu(2l)\in H_{\gamma}$ and $\nu^\prime(2l)\in (T_{\nu(2l)}H_{\gamma})^\perp$.
\end{lemma}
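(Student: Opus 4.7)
The plan is to adapt Klingenberg's classical injectivity-radius argument to the submanifold setting of $H_\gamma\subset M$. The strategy has three stages: (i) extract a second minimizing normal geodesic $\sigma\neq\nu$ of length $l$ ending at $p:=\nu(l)$; (ii) show by a first-variation/shortening argument that $\sigma^\prime(l)=-\nu^\prime(l)$; and (iii) conclude that the forward extension of $\nu$ past time $l$ traces out the reverse of $\sigma$, which yields $\nu(2l)=\sigma(0)\in H_\gamma$ and $\nu^\prime(2l)=-\sigma^\prime(0)\in (T_{\nu(2l)}H_\gamma)^\perp$.

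For stage (i), since $\exp^\perp$ is a diffeomorphism on $H_\gamma\times[0,l)$, the geodesic $\nu$ is minimizing on $[0,l)$, and by continuity $d(H_\gamma,\nu(l))=l$. The hypothesis \eqref{min} gives, for each $t>l$, a unit-speed normal geodesic $\sigma_t$ from $H_\gamma$ to $\nu(t)$ of length $s_t<t$. Taking $t_k\to l^+$ and using compactness of $H_\gamma$ together with that of the unit sphere of $NH_\gamma$, one extracts a subsequential limit $\sigma$ of length $l$ ending at $p$. To show $\sigma\neq\nu$, the non-focal hypothesis at $p$ makes $\exp^\perp$ a local diffeomorphism near $(\nu(0),l\nu^\prime(0))$: if $\sigma=\nu$, then for large $k$ the two preimages $(\sigma_{t_k}(0),s_{t_k}\sigma_{t_k}^\prime(0))$ and $(\nu(0),t_k\nu^\prime(0))$ of $\nu(t_k)$ would both lie in an injectivity neighborhood of $(\nu(0),l\nu^\prime(0))$, forcing $s_{t_k}=t_k$ and contradicting $s_{t_k}<t_k$.

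Stage (ii) is the heart of the proof. Suppose for contradiction that $\sigma^\prime(l)\neq-\nu^\prime(l)$, and let $\theta<\pi$ denote the angle between these unit vectors, so that $v:=\nu^\prime(l)+\sigma^\prime(l)\neq 0$. Since $p$ is not a focal point along either geodesic, the implicit function theorem applied to $\exp^\perp$ produces smooth families $\nu_\epsilon,\sigma_\epsilon$ of normal geodesics from $H_\gamma$ to $q_\epsilon:=\exp_p(-\epsilon v)$, with $\nu_0=\nu$ and $\sigma_0=\sigma$. The first variation of arc length for geodesics emanating from a submanifold yields
\[
|\nu_\epsilon|=l-\epsilon(1+\cos\theta)+O(\epsilon^2), \qquad |\sigma_\epsilon|=l-\epsilon(1+\cos\theta)+O(\epsilon^2);
\]
since $1+\cos\theta>0$, both lengths drop strictly below $l$ for all sufficiently small $\epsilon>0$. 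By continuity $\nu_\epsilon\neq\sigma_\epsilon$, so $(\nu_\epsilon(0),|\nu_\epsilon|\nu_\epsilon^\prime(0))$ and $(\sigma_\epsilon(0),|\sigma_\epsilon|\sigma_\epsilon^\prime(0))$ are two distinct points of $H_\gamma\times[0,l)$ mapping under $\exp^\perp$ to the same point $q_\epsilon$, contradicting injectivity on this set.

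Stage (iii) is then immediate: with $\sigma^\prime(l)=-\nu^\prime(l)$, uniqueness of geodesics from initial data gives $\nu(l+t)=\sigma(l-t)$ for $t\in[0,l]$, and setting $t=l$ produces the boundary return and the claimed perpendicularity. The principal obstacle is stage (ii), where the non-focal hypothesis must be used both to produce the smooth perturbations $\nu_\epsilon,\sigma_\epsilon$ via the implicit function theorem and to validate the first variation, and where the antipodal conclusion relies on the precise choice $v=\nu^\prime(l)+\sigma^\prime(l)$, which turns the first variation into a strict length decrease on both geodesic branches simultaneously.
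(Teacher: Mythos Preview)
Your proof is correct and follows essentially the same Klingenberg-type argument as the paper: extract a second minimizing normal geodesic $\sigma$ via compactness, use the non-focal hypothesis to rule out $\sigma=\nu$, apply the first variation of length to force $\sigma'(l)=-\nu'(l)$, and conclude by uniqueness of geodesics. The only cosmetic differences are that you pick the explicit direction $v=\nu'(l)+\sigma'(l)$ (whereas the paper takes any $v$ with $\langle v,\nu'(l)\rangle<0$ and $\langle v,\sigma'(l)\rangle<0$) and you phrase the resulting contradiction directly as a failure of injectivity of $\exp^\perp$ on $H_\gamma\times[0,l)$.
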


\begin{proof}
	
	Because of \eqref{min}, there exists a sequence of real numbers $\{\epsilon_i\}$ with
	\[
	\epsilon_i>0, \quad \lim_{i\to\infty}\epsilon_i=0,
	\]
	 such that
	\begin{align}\label{def}
	\nu(l+\epsilon_i)=\sigma_i(l+\epsilon^\prime_i)
	\end{align}
	and
	\[
	-\epsilon_i\leq \epsilon^\prime_i \leq\epsilon_i,
	\]
	where $\sigma_{i}$ is a normalized minimizing geodesic from $H_{\gamma}$ to $\nu(l+\epsilon_i)$ with $\sigma^\prime_{i}(0)\in (T_{\sigma_{i}(0)}H_{\gamma})^\perp$. Thus, $(\sigma_{i}(0), \sigma^\prime_{i}(0))$ is a sequence in the unit tangent normal bundle of $H_{\gamma}$ in $M$, say $N^{1}H_{\gamma}$. By compactness of $N^1H_{\gamma}$ we have up to a subsequence:
	\[
	\lim_{i\to\infty}(\sigma_{i}(0), \sigma^\prime_{i}(0))=(q, v)
	\] 
	where $q\in H_{\gamma}$ and $v\in (T_{q}H_{\gamma})^\perp$ with $|v|=1$. Note that by continuity, the limiting geodesic say $\sigma:[0, l]\rightarrow M$ with initial conditions
	\[
	(q, v)=(\sigma(0), \sigma^\prime(0))
	\]
	has to be minimizing. We now claim that
	\[
	(q, v)\neq (\nu(0), \nu^\prime(0)),
	\]
	so that the geodesics $\nu$ and $\sigma$ have to be distinct. By contradiction, assume that this is not the case. Let $U$ be an open neighborhood of $\nu(l)\in M$. Let $V$ be an open neighborhood of $(\nu(0), l\nu^\prime(0))\in NH_{\gamma}$ such that
	\[
	\exp^\perp: V\longrightarrow U
	\]
	is a diffeomorphism. For $i$ large enough, we have $(\sigma_{i}(0), (l+\epsilon^\prime_{i})\sigma^\prime_{i}(0))\in V$. Because of \eqref{def}, we have
	\[
	\exp^{\perp}(\nu(0), (l+\epsilon_{i})\nu^\prime(0))=\exp^{\perp}(\sigma_i(0), (l+\epsilon^\prime_{i})\sigma_i^\prime(0)),
	\]
	which then implies that
	\[
	\sigma_i(0)=\nu(0), \quad \epsilon_i=\epsilon^\prime_i, \quad \nu^\prime(0)=\sigma^\prime_i(0).
	\]
	Thus, $\nu$ is minimizing up to time $l+\epsilon_i$, and this is a contradiction. It remains now to prove that 
	\begin{align}\label{def2}
	\nu^\prime(l)=-\sigma^\prime(l).
	\end{align}
	If this is not the case, there exists $v\in T_{\nu(l)}M$ such that
	\[
	\langle v, \nu^\prime(l) \rangle<0, \quad \langle v, \sigma^\prime(l) \rangle<0.
	\]
	Let $\tau: [-\delta, \delta]\rightarrow M$ be a curve with $\tau(0)=\nu(l)=\sigma(l)$ and $\tau^\prime(0)=v$. Let $V, W$ be neighborhoods of $(\nu(0), l\nu^\prime(0))$, $(\sigma(0), l\sigma^\prime(0))$ respectively such that
	\[
	\exp^\perp: V\longrightarrow U, \quad \exp^\perp: W\longrightarrow U
	\] 
	are diffeomorphisms, and where $U$ is a neighborhood of $\nu(l)$ containing $\tau[-\delta, \delta]$. Let 
	\[
	v_1: [-\delta, \delta]\rightarrow V, \quad v_2: [-\delta, \delta]\rightarrow W
	\]
	 be curves such that:
	\[
	\exp^\perp(v_1(s))=\tau(s), \quad \exp^\perp(v_2(s))=\tau(s).
	\]
	Thus, we have
	\[
	v_1(s)=(\alpha_1(s), \beta_1(s)), \quad v_2(s)=(\alpha_2(s), \beta_2(s)),
	\]
	where $\alpha_1$, $\alpha_2$ are smooth curves in $H_{\gamma}$ and where for any $s\in [-\delta, \delta]$:
	\[
	\beta_{1}(s)\in (T_{\alpha_1(s)}H_{\gamma})^\perp, \quad \beta_{2}(s)\in (T_{\alpha_2(s)}H_{\gamma})^\perp.
	\]
	Define two variations for $s\in[-\delta, \delta]$, $t\in [0, l]$ as follows
	\[
	\gamma^1_s(t)=\exp^\perp\Big(\alpha_1(s), \frac{t}{l}\beta_{1}(s)\Big), \quad \gamma^2_s(t)=\exp^\perp\Big(\alpha_2(s), \frac{t}{l}\beta_{2}(s)\Big),
	\]
	and notice that
	\[
	\gamma^1_s(0)=\alpha_1(s), \quad\gamma^2_s(0)=\alpha_2(s), \quad\gamma^1_{s}(l)=\gamma^2_{s}(l), \quad \gamma^1_{0}(t)=\nu(t), \quad \gamma^2_{0}(t)=\sigma(t).
	\]
	Now a simple application of the first variation of length (or energy) along these variations gives that for $s>0$ sufficiently small:
	\[
	\ell(\gamma^1_s)<\ell(\nu), \quad \ell(\gamma^2_s)<\ell(\sigma).
	\]
	Since $\gamma^1_s(l)=\gamma^2_s(l)$ we get a contradiction. The proof is complete.
\end{proof}

Given $H_{\gamma}\subset M$, we can derive an effective lower bound for the length of short geodesics as in Lemma \ref{short}. This estimate is expressed in terms of the Margulis constant $\mu$ of $(M, g)$.

\begin{lemma}\label{boundaryinjectivity}
	Let $\nu: [0, 2l]\rightarrow M$ be a normalized geodesic in $M$  with $\nu^\prime(0)\in (T_{\nu(0)}H_{\gamma})^\perp$ and $\nu^\prime(2l)\in (T_{\nu(2l)}H_{\gamma})^\perp$ of minimal length. We then have
	\[
	\frac{1}{2}\ell(\nu)=l\geq \frac{\mu}{200},
	\]
	where $\mu$ is the Margulis constant.
\end{lemma}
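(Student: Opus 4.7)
The plan is proof by contradiction: suppose $l<\mu/200$, so $2l<\mu/100$, and derive an inconsistency from the geometry of $\nu$.

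First I localize. Write $p=\nu(0)$ and $q=\nu(2l)$. By Lemma \ref{lbound}, $i(p)\ge 24\mu/50$, which comfortably exceeds $2l$, so $\nu$ lies inside the injectivity ball at $p$, is the unique minimizing geodesic from $p$ to $q$, and $d_M(p,q)=2l$. I also claim that $H_\gamma\cap B(p,\mu/2)$ is a single embedded sheet through $p$: any other portion of $H_\gamma$ nearby would have to correspond to points ``antipodal'' across the core geodesic $\gamma$, and these lie at distance at least $20-2\mu/50>19$ from $p$ by combining the bound $d(\partial U_\gamma,\gamma)\ge 10$ from Lemma \ref{BCD1} with the $\mu/50$-closeness of $H_\gamma$ and $\partial U_\gamma$ from Theorem \ref{mainBCD}. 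Since $\mu\le 1$, $B(p,\mu/2)$ does not meet those other portions.

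Next, let $\rho$ denote the signed distance to this local sheet of $H_\gamma$, taken positive on the outward side of $\gamma$. By Lemma \ref{focal} the one-sided neighborhood on which $\rho$ is smooth extends at least the focal distance $\coth^{-1}\lambda(a,n)$, and by calibrating the dimensional constant $c_n$ one may assume this focal distance exceeds $\mu/200$; since $d(\nu(t),H_\gamma)\le l<\mu/200$, the curve $\nu$ stays in the smooth domain. Set $\phi(t)=\rho(\nu(t))$. The orientation inherent in the construction of Lemma \ref{short}---namely that $\nu$ leaves $p$ as an outward normal ray and arrives at $q$ as the time-reversal of a second outward normal ray---yields the boundary data
\[
\phi(0)=\phi(2l)=0,\qquad \phi'(0)=+1,\qquad \phi'(2l)=-1.
\]

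Since $\nu$ is a geodesic, $\phi''(t)=\mathrm{Hess}\,\rho(\nu'(t),\nu'(t))$. Standard comparison geometry for the shape operator of the $\rho$-level sets---the Riccati equation with initial datum $|S_{H_\gamma}|\le\lambda(a,n)$ from Theorem \ref{mainBCD} and ambient curvature bound $|\sec_g|\le a^2$---yields a uniform bound $|\phi''(t)|\le C(a,n)$ on the relevant segment. Integrating,
\[
-2 \;=\; \phi'(2l)-\phi'(0) \;=\; \int_0^{2l}\phi''(t)\,dt,
\]
forces $2\le 2l\cdot C(a,n)$, i.e., $l\ge 1/C(a,n)$. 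Calibrating $c_n$ so that $1/C(a,n)\ge \mu/200$ yields the contradiction.

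The main obstacle is pinning down the explicit numerical constant $200$. The single-sheet reduction and the bound on $\phi''$ are both robust comparison-geometric arguments, but extracting exactly $\mu/200$ requires simultaneous bookkeeping of the dimensional constant $c_n$, the focal radius $\coth^{-1}\lambda(a,n)$, and the Riccati-propagated shape operator bound inherited from the BCD construction. A weaker explicit constant falls out of the same scheme with little effort; the stated value $200$ is what forces the careful tracking of these comparison quantities.
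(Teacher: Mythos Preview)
There is a genuine gap. Your key claim---that the signed distance $\rho$ to the local sheet is smooth on the tube $\{\rho<\text{focal distance}\}$---conflates the focal radius with the normal injectivity radius of the sheet. The signed distance to a hypersurface is smooth only up to the \emph{minimum} of the focal distance and the first time two normal geodesics from distinct base points meet. But the very configuration produced by Lemma \ref{short} exhibits such a meeting: $\nu|_{[0,l]}$ is the outward normal geodesic from $p$, and $\nu|_{[l,2l]}$ is the time-reversal of the outward normal geodesic from $q$, so $\nu(l)$ is equidistant from the two distinct sheet points $p$ and $q$. Hence $\nu(l)$ lies on the normal cut locus of the sheet, $\rho$ is not $C^1$ there, and $\phi(t)=\rho(\nu(t))$ is exactly the tent function $\min(t,2l-t)$. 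Distributionally $\phi''$ carries a Dirac mass $-2\delta_{t=l}$, so the bound $|\phi''|\le C(a,n)$ fails and the integration $\int_0^{2l}\phi''=-2$ yields no information about $l$. In short, assuming $\rho$ is smooth out to the focal radius is tantamount to assuming the normal injectivity radius is at least the focal radius, which is precisely the kind of lower bound the lemma is meant to supply.

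A secondary issue: the remarks about ``calibrating $c_n$'' are not available here. The Margulis constant $\mu=a^{-1}c_n$ and the BCD constants $\lambda(a,n)$, $\alpha(a,n)$ are fixed once and for all in Section \ref{newTT}; the lemma asks for the specific bound $\mu/200$ with that $\mu$. The paper's argument does not touch the Hessian of a distance function at all. Instead it exploits the one structural feature of $H_\gamma$ your approach ignores: the angle condition in Theorem \ref{mainBCD}, which says the outward normal to $H_\gamma$ makes an angle $<\pi/2-\alpha(a,n)$ with the radial field $\mathcal R$. One drops a perpendicular from $p$ to the radial arc through $q$, forms a small geodesic triangle with vertices $p$, $q$, and the foot $x'$, and reads off a contradiction from the angle-sum inequality $\gamma_1+\gamma_2+\gamma_3<\pi$ in negative curvature combined with $\alpha_q<\pi/2-\alpha(a,n)$. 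That argument uses only distances and angles, never smoothness of a distance function, and delivers the constant $\mu/200$ directly.
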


\begin{proof}
    Suppose this is not the case. There exist $p, q\in H_{\gamma}$ and a normalized geodesic $\nu: [0, 2l]\rightarrow M$ with
	\[
	\nu(0)=p, \quad \nu(2l)=q, \quad \nu^\prime(0)\in (T_{p}H_{\gamma})^\perp, \quad \nu^\prime(2l)\in (T_{q}H_{\gamma})^\perp,
	\]
	such that $2l=d(p, q)<\mu/100$. By Lemma \ref{lbound},  we have $i(p)\geq24/50\mu$ so that $\nu$ is indeed length minimizing for any $t\in[0, 2l]$. Next, we denote by $\delta^x$ and $\delta^y$ the distinct normalized radial arcs emanating from $x, y \in \gamma$ and passing through $q$ and $p$ respectively. Now given $p \in \delta^y$ as before, let
	\[
	L=d(p, \delta^x)=d(p, x^\prime),
	\]
	where $x^\prime$ is a point on $\delta^x$. To show the existence of such $x^\prime$ argue as follows. First, note that the arc $\delta^x$ extends  past the point $q$ for at least a segment of length $\frac{24}{50}\mu$. Indeed, recall that the radial arc $\delta^x$ stops when it reaches the first point $z\in M$ with $i(z)=\mu$. By Lemma \ref{lbound}, we then have:
	\begin{align}\label{q}
	d(q, z)\geq |i(z)-i(q)|\geq \frac{24}{50}\mu,
	\end{align}
	which implies
	\[
	d(p, z)\geq |d(p, q)-d(q, z)|\geq \frac{24}{50}\mu-\frac{\mu}{100}=\frac{47}{100}\mu.
	\]
	Also, the point $q\in \delta^x$ is quite far from the core geodesic $\gamma$. Indeed, by combining Lemma \ref{BCD1} and Theorem \ref{mainBCD}, we have
	\begin{align}\label{distancecore}
	d(q, \gamma)\geq 10-\frac{\mu}{50},
	\end{align}
	which then implies
	\[
	d(p, x)\geq 9.
	\]
	Thus, since we assumed $\ell(\nu)<\mu/100$ and $\nu$ connects $p$ with $q\in\delta^x$, we have the existence of $x^\prime\in\delta^x$ realizing the distance $L=d(p, \delta^x)<\mu/100$. We denote by $\beta: [0, L]\rightarrow M$ the normalized geodesic with $\beta(0)=p$, $\beta(L)=x^\prime$, realizing such a distance. Observe that $\beta$ is perpendicular to $\delta^x$ at the intersection point $x^\prime$. One may wonder if we can have $x^\prime=q$ and $\beta=\nu$. This is not case. To show this, let $\alpha_{p}$ the angle between $\delta^y$ and $\nu$ at $p$, and $\alpha_{q}$ the angle between the $\delta^x$ and $-\nu$ at $q$. If $x^\prime=q$, we would have $\alpha_{q}=\pi/2$. On the other hand, by Theorem \ref{mainBCD} we know that $0\leq \alpha_q< \pi/2-\alpha(n, a)$, where $\alpha(n, a)\in (0, \pi/2)$. We could still have $x^\prime=q$ and $\beta\neq\nu$. This is not the case either. In fact, if this would imply the existence of two distinct geodesics $\nu$ and $\beta$ starting at $p$ and meeting at $q$ with length less than $\mu/100$. This implies $i(p)<\mu/100$ which contradicts Lemma \ref{lbound}. 
	
	Now that we know that $\beta\neq\nu$ and $x^\prime\neq q$, the idea is to look at the geodesic triangle whose vertices are $x^{\prime}$, $p$ and $q$. Denote by $l_1(=\beta)$ the segment joining $p$ and $x^\prime$ and by $l_2$ the segment between $x^\prime$ and $q$. Next, we denote by $\gamma_1$ the angle between $l_2$ and $\nu$ at $q$, by $\gamma_2$ the angle between $\nu$ and $l_{1}$ at $p$, and finally by $\gamma_3=\pi/2$ the angle between $l_1$ and $l_2$. By the triangle inequality notice that
	\begin{align}\label{distance}
	d(x^\prime, q)<\frac{\mu}{100}+\frac{\mu}{100}=\frac{\mu}{50},
	\end{align}
	so that we have a well defined geodesic triangle between the vertices $x^\prime$, $p$ and $q$ and the segment $l_{2}$ is just a piece of the arc $\delta^x$. Now, the this geodesic triangle is within a manifold with $-a^2\leq\sec_{g}\leq -1$ so that:
	\[
	\gamma_{1}+\gamma_{2}+\gamma_{3}<\pi,
	\]
	see for example Lemma 3.1 in \cite{doC92}.
	On the other hand:
	\[
	\gamma_{1}+\gamma_{2}+\gamma_{3}= \pi-\alpha_q+\gamma_2+\frac{\pi}{2}> \frac{3}{2}\pi-\alpha_q.
	\]
	But now, because of Theorem \ref{mainBCD} we have
	\[
	\frac{3}{2}\pi-\alpha_q\geq \pi+\alpha(a, n)>0,
	\]
	so that we obtain the contradiction
	\[
	\pi>\gamma_{1}+\gamma_{2}+\gamma_{3}\geq \pi+\alpha(n, a)>\pi.
	\]
	Finally, notice that the degenerate case $\alpha_q=0$ cannot occur. Indeed, if this was the case we would have that the geodesic $\nu$ coincides with the piece of the radial arc $\delta^x$ joining $q$ with $z$. By \eqref{q} we know that $d(q, z)\geq 24/50\mu$, so that we cannot possibly have $l(\nu)<\mu/100$. Similarly for the case $\alpha_p=0$. The proof is complete.
\end{proof}

We can now prove the main theorem stated in the introduction.

\begin{proof}[Proof of Theorem \ref{improved}]
	Given $(N, g)$, consider the exponential map restricted to the normal bundle of $\partial N$
	\[
	\exp^\perp: \partial N\times [0, \infty)\longrightarrow M.
	\]
	Now, the boundary $\partial N$ is the disjoint union of finitely many smooth hypersurfaces $\{H_{\gamma}\}$. By Lemma \ref{lbound}, these hypersufaces satisfy
	\[
	d(H_{\gamma}, H_{\gamma^\prime})>\frac{48}{50} \mu,
	\] 
	for $\gamma\neq\gamma^\prime$. Lemma \ref{focal} and Lemma \ref{short} now provide estimates on the focal set and short geodesics for any of the $H_{\gamma}$'s in $\partial N$. These estimates depend upon the dimension of $M$ and a lower bound on the sectional curvature only. As the Margulis constant $\mu$ can be estimated in terms of these constants as well, the proof is complete. 
\end{proof}


\end{document}